\title{Non-collapsing in mean-convex mean curvature flow}
\author{Ben Andrews}
\address{Mathematical Sciences Institute, Australian National University, ACT 0200 Australia; Mathematical Sciences Center, Tsinghua University, Beijing 100084, China; Morningside Center for Mathematics, Chinese Academy of Sciences, Beijing 100190, China, }
\email{Ben.Andrews@anu.edu.au}
\newtheorem{thm}{Theorem}   % Standard theorem environment
\newtheorem{prop}[thm]{Proposition}
\newtheorem{lem}[thm]{Lemma}          % Lemma environment with numbering 
\theoremstyle{definition}
\newtheorem{defn}[thm]{Definition}    % Definition environment with 
\newtheorem*{rem}{Remarks}             % Unnumbered environment for remarks.
\newcommand\RR{\mathbb{R}}
\newcommand{\xH}{H_x}
\newcommand{\xh}{h^x}
\newcommand{\yH}{H_y}
\newcommand{\yh}{h^y}
\newcommand{\xnu}{\nu_x}
\newcommand{\ynu}{\nu_y}
\begin{document}

\begin{abstract}    % type your abstract below
We provide a direct proof of a non-collapsing estimate for compact hypersurfaces with positive mean curvature moving under the mean curvature flow:  Precisely, if every point on the initial hypersurface admits an interior sphere with radius inversely proportional to the mean curvature at that point, then this remains true for all positive times in the interval of existence.
\end{abstract}

\maketitle

%%%%%%%%%%%%%%%%%%%%   Start of main body of article

We follow \cite{SW} in defining a notion of 'non-collapsing' for embedded hypersurfaces as follows:  Recall that a hypersurface $M$ is called \emph{mean-convex} if the mean curvature $H$ of $M$ is positive everywhere.

\begin{defn}
A mean convex hypersurface $M$ bounding an open region $\Omega$ in $\mathbb{R}^{n+1}$ is $\delta$-non-collapsed (on the scale of the mean curvature) if for every $x\in M$ there there is an open ball $B$ of radius $\delta/H(x)$ contained in $\Omega$   with $x\in\partial\Omega$.
\end{defn}

It was proved in \cite{SW} that any compact mean-convex solution of the mean curvature flow is $\delta$-non-collapsed for some $\delta>0$.  Closely related statements are deduced by Brian White in \cite{WhiteMCF}.  In both of these works the result is derived only after a lengthy analysis of the properties of solutions of mean curvature flow.  The purpose of this paper is to provide a self-contained proof of such a non-collapsing result using only the maximum principle.

It is first necessary to reformulate the non-collapsing condition to allow the application of the maximum principle.  Given a hypersurface $M=X(\bar M)$, define a function $Z$ on 
$M\times M$ by
$$
Z(x,y)=\frac{H(x)}{2}\|X(y)-X(x)\|^{2}+\delta\left\langle X(y)-X(x),\nu(x)\right\rangle.
$$
Then we have the following characterization:

\begin{prop}
$M$ is $\delta$-non-collapsed if and only if $Z(x,y)\geq 0$ for all $x,y\in\bar M$.
\end{prop}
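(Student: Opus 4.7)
The plan is to interpret $Z(x,y)\geq 0$ geometrically by completing the square. Writing $r=\delta/H(x)$ and $c(x)=X(x)-r\,\nu(x)$ (the candidate center of an interior ball at $x$), one computes
\[
\|X(y)-c(x)\|^2 = \|X(y)-X(x)\|^2 + 2r\langle X(y)-X(x),\nu(x)\rangle + r^2,
\]
so, after multiplying $Z(x,y)$ by $2/H(x)>0$, the inequality $Z(x,y)\geq 0$ is equivalent to $\|X(y)-c(x)\|\geq r$. Thus $Z(x,\cdot)\geq 0$ on $\bar M$ says exactly that no point of $M=X(\bar M)$ lies in the open ball $B_r(c(x))$; note that $\|X(x)-c(x)\|=r$ already, so the case $y=x$ gives equality automatically.

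For the ``if'' direction, the open ball $B=B_r(c(x))$ is connected and disjoint from $M=\partial\Omega$, hence lies entirely in $\Omega$ or entirely in its exterior. Since $c(x)$ is obtained from $X(x)$ by moving along the inward normal $-\nu(x)$, the nearby points $X(x)-\varepsilon\,\nu(x)\in B$ belong to $\Omega$ for small $\varepsilon>0$, forcing $B\subset\Omega$; together with $X(x)\in\partial B$, this verifies the non-collapsing condition at $x$.

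Conversely, if an open ball $B'=B_r(p)$ lies in $\Omega$ with $X(x)\in\partial B'$, then the inward unit normal to $\partial B'$ at $X(x)$ coincides with $-\nu(x)$ (because $B'$ lies on the interior side of $M$ at $X(x)$), so $p=X(x)-r\,\nu(x)=c(x)$. The relation $B'\cap M=\emptyset$ then gives $\|X(y)-c(x)\|\geq r$ for every $y\in\bar M$, which unwinds to $Z(x,y)\geq 0$. The argument is essentially a completion of squares; the only mildly subtle step is the short topological remark that identifies the center of any interior tangent ball as $c(x)$ and places the ball in $\Omega$ rather than in its complement. No real obstacle is anticipated.
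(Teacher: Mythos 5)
Your proof is correct and follows essentially the same route as the paper's: complete the square to identify $\frac{2}{H(x)}Z(x,y)$ with $\|X(y)-c(x)\|^2-r^2$, then observe that an interior tangent ball at $X(x)$ is forced to have centre $c(x)=X(x)-\frac{\delta}{H(x)}\nu(x)$. You spell out the short topological step (connectedness of $B$ and identification of the centre in the converse) that the paper leaves as ``clear,'' and you even have the correct factor $2/H(x)$ where the paper's displayed equation has a harmless typo ($2H(x)$); otherwise the arguments are the same.
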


\begin{proof}
By convention we choose the unit normal $\nu$ to be outward-pointing, so that a ball in $\Omega$ of radius $\delta/H(x)$ with $X(x)$ as a boundary point must have centre at the point $p(x)=X(x)-\frac{\delta}{H(x)}\nu(x)$.  The statement that this ball is contained in $\Omega$ is equivalent to the statement that no points of $M$ are of distance less than 
$\delta/H(x)$ from $p$:  
$$
0\leq \|X(y)-p(x)\|^{2}-\left(\frac{\delta}{H(x)}\right)^{2} = 2H(x)Z(x,y)
$$
for all $x$ and $y$ in $\bar M$.
Since $H>0$ this is equivalent to the statement that $Z$ is non-negative everywhere.  The converse is clear.
\end{proof}

The main result of this paper is the following:

\begin{thm}
Let $\bar M^n$ be a compact manifold, and $X: \bar M^n\times[0,T)\to\RR^{n+1}$ a family of smooth embeddings evolving by mean curvature flow, with positive mean curvature.  If $M_{0}=X(\bar M,0)$ is $\delta$-non-collapsed for some $\delta>0$, then $M_{t}=X(\bar M,t)$ is $\delta$-non-collapsed for every $t\in[0,T)$.
\end{thm}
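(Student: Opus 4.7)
The plan is to apply the parabolic maximum principle to $Z$ on $\bar M\times\bar M\times[0,T)$, using the Proposition to translate $\delta$-non-collapsing into $Z\ge 0$. One delicacy is that $Z$ vanishes identically on the diagonal, so I would instead work with the perturbed function $Z_\varepsilon:=Z+\varepsilon(1+t)$, which is strictly positive at $t=0$ and strictly positive on the diagonal for all $t$. Assume for contradiction that $Z_\varepsilon$ attains a negative value at some positive time; then there exist $t_0\in(0,T)$ and a pair $(x_0,y_0)\in\bar M\times\bar M$, necessarily with $x_0\ne y_0$, at which $Z_\varepsilon(x_0,y_0,t_0)=0$ for the first time. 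At this point $\nabla_x Z=\nabla_y Z=0$, the full spatial Hessian of $Z$ on $M\times M$ is nonnegative, and $\partial_t Z\le -\varepsilon$. The goal is to contradict the last inequality by showing $\partial_t Z>0$, then let $\varepsilon\to 0$.

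From $\nabla_y Z=0$ I deduce that the vector $\xH\zeta+\delta\xnu$, where $\zeta:=X(y_0)-X(x_0)$, is normal to $M$ at $y_0$. Its squared length computed from $Z\approx 0$ equals $\delta^2$ (up to $O(\varepsilon)$), and fixing the geometrically correct sign yields
\begin{equation*}
\ynu \;=\; \xnu \;+\; \tfrac{\xH}{\delta}\,\zeta ,
\end{equation*}
so $X(y_0)$ is (nearly) the antipodal tangency of the interior ball of radius $\delta/\xH$ at $X(x_0)$, and $\xnu,\ynu,\zeta$ are coplanar. The condition $\nabla_x Z=0$ similarly produces a complementary identity expressing $\nabla\xH$ at $x_0$ in terms of $\zeta$, $\xh$, and $\ynu$. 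To convert the Hessian inequality into usable scalar form I would introduce the ambient isometry $P\colon T_{x_0}M\to T_{y_0}M$ induced by the rotation in $\mathrm{span}(\xnu,\ynu)$ sending $\xnu$ to $\ynu$ (and the identity on the orthogonal complement). With an orthonormal frame $\{e_i\}$ of $T_{x_0}M$ and $f_i:=Pe_i$, summing the Hessian inequality over the paired vectors $(e_i,f_i)$ gives
\begin{equation*}
\Delta_x Z+\Delta_y Z+2\sum_{i=1}^n \nabla^x_{e_i}\nabla^y_{f_i}Z \;\ge\; 0.
\end{equation*}

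I would then compute $\partial_t Z$ and each of the three second-order quantities using the MCF evolutions $\partial_t X=-H\nu$, $\partial_t H=\Delta H+|A|^2H$, $\partial_t\nu=\nabla H$. Substituting the first-order identities and applying the Codazzi equation $\nabla^i\xh_{ij}=\nabla_j\xH$, the terms involving $\yH$ and $\langle\ynu,\xnu\rangle$ from $\partial_t Z$ cancel against those in $\Delta_y Z$, while the leading contribution $-2n\xH$ produced by $\Delta_x+\Delta_y$ is absorbed by the mixed-derivative trace via the identity $\sum_i\langle e_i,f_i\rangle=n-1+\cos\theta$ with $\cos\theta=\langle\ynu,\xnu\rangle$. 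The residue should reduce to a manifestly positive quantity dominated by $\tfrac{\xH|A|^2}{2}|\zeta|^2$, so that $\partial_t Z>0$ strictly, contradicting $\partial_t Z\le -\varepsilon$.

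The principal obstacle is the final computation. The choice of the operator $\mathcal L := \Delta_x+\Delta_y+2\sum_i\nabla^x_{e_i}\nabla^y_{f_i}$ must be exactly this one, with the frame pairing dictated by the tangent-ball geometry, because any other pairing leaves behind terms involving $\yH$ or $\langle\ynu,\xnu\rangle$ whose signs cannot be controlled. Beyond this, tracking the many applications of Codazzi and of the first-order conditions, and organising everything in terms of $\xH$, $|A|^2$, $\zeta$, and $\ynu-\xnu$, is the technical work required to recognise the remainder as a sum of positive contributions.
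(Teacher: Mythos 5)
Your overall strategy matches the paper's: double the variables, trace the full Hessian of $Z$ against a carefully paired frame obtained by rotating in $\mathrm{span}(\xnu,\ynu)$, and use the first-order conditions plus Codazzi to get cancellations. The identity $\ynu=\xnu+\frac{\xH}{\delta}\zeta$ at a critical point with $Z=0$ is exactly the content of the paper's Lemma (when $\nabla_y Z=0$ and $Z=0$), and the frame pairing you describe is the one the paper uses. So far so good.

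There are, however, two genuine problems. First, the claimed conclusion $\partial_t Z>0$ \emph{strictly} at the touching point, with a ``manifestly positive residue dominated by $\frac{\xH|A|^2}{2}|\zeta|^2$,'' is false. What the paper actually obtains at a critical point is $\partial_t Z=\mathcal L Z+cZ$ with $c=|\xh|^2+\frac{4\xH(\xH-\delta\xh_{nn})}{\delta^2}\langle w,\partial^y_n\rangle^2$ a bounded coefficient; the residue is proportional to $Z$ itself and therefore vanishes at $Z=0$. A shrinking round sphere with $\delta=n$ has $Z\equiv 0$ and $\partial_t Z\equiv 0$, so your strict inequality cannot hold. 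Consequently your perturbation $Z_\varepsilon=Z+\varepsilon(1+t)$ is also inadequate: at the first touching time, $Z=-\varepsilon(1+t_0)<0$ and you only get $\partial_t Z\ge cZ=-c\,\varepsilon(1+t_0)$, which does not contradict $\partial_t Z\le-\varepsilon$ unless $c\le 1/(1+t_0)$. You need the exponential perturbation $Z_\varepsilon=Z+\varepsilon e^{\Lambda t}$ with $\Lambda$ exceeding the supremum of $|c|$ on $(\bar M\times\bar M\setminus\{x=y\})\times[0,t_0]$, and you need to have established that $c$ is in fact bounded there --- which is nontrivial since $c$ involves $\langle w,\partial^y_n\rangle^2$ and $\xh_{nn}$.

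Second, your operator $\mathcal L=\Delta_x+\Delta_y+2\sum_i\nabla^x_{e_i}\nabla^y_{f_i}$ is \emph{not} the one the paper uses. The paper's mixed term carries the weight $\langle\partial^x_i,\partial^y_j\rangle$, so in the adapted frame the $(n,n)$ mixed coefficient is $\cos\theta=\langle\xnu,\ynu\rangle$ rather than $1$. Both operators are admissible at a minimum (each coefficient matrix $\begin{pmatrix}I&A\\A^{T}&I\end{pmatrix}$ is positive semi-definite), but the extra $\cos\theta$ is what makes the $\xh$-traces and the Laplacian terms reorganise into $cZ$. With your weight, the difference $2(1-\cos\theta)\,\partial^2_{x^n y^n}Z$ leaves a residual involving $(\xH-\delta\xh_{nn})$ and the tangential components of $w$ that is \emph{not} proportional to $Z$ (note $1-\cos\theta=\frac{d^2\xH^2}{2\delta^2}$ when $Z=0$, which is order $d^2$, not order $Z$). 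Your remark that ``the choice of $\mathcal L$ must be exactly this one'' is therefore wrong; the correct choice carries the angle factor, and this is precisely the technical input needed to get a coefficient bounded on $(\bar M\times\bar M)\setminus\{x=y\}$.
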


\begin{proof}
By the Proposition, the Theorem is equivalent to the statement that the function 
$Z:\ \bar M\times\bar M\times[0,T)\to \mathbb{R}$ defined by
$$
Z(x,y,t)=\frac{H(x,t)}{2}\|X(y,t)-X(x,t)\|^{2}+\delta\left\langle X(y,t)-X(x,t),\nu(x,t)\right\rangle
$$
is non-negative everywhere provided that it is non-negative on 
$\bar M\times\bar M\times\{0\}$.  We prove this using the maximum principle.  For convenience we denote by
 $\xH$ the mean curvature and $\xnu$ the outward unit normal at $(x,t)$, and we write $d=|X(y,t)-X(x,t)|$ and $w=\frac{X(y,t)-X(x,t)}{d}$, and  $\partial^x_i=\frac{\partial X}{\partial x^i}$.

We compute the first and second derivatives of $Z$, with respect to some choices of local normal coordinates $\{x^i\}$ near $x$ and $\{y^i\}$ near $y$.  
\begin{equation}\label{eq:Zy}
\frac{\partial Z}{\partial y^i} = d\xH\langle w,\partial^y_i\rangle + \delta\langle \partial^y_i,\xnu\rangle.
\end{equation}
From this we have the following:

\begin{lem}
$$
\xnu+\frac{d\xH}{\delta}w - \frac{1}{\delta}\frac{\partial Z}{\partial y^q}g_y^{qp}\partial^y_p = 
\ynu\sqrt{1+\frac{2\xH}{\delta^2}Z-\frac{1}{\delta^2}|\nabla_yZ|^2}
$$
\end{lem}

\begin{proof}
Equation \eqref{eq:Zy} gives for each $i$
$$
0 = \langle \partial^y_i,\xnu+\frac{d\xH}{\delta}w\rangle - \frac{1}{\delta}\frac{\partial Z}{\partial y^i} = 
\langle \partial^y_i,\xnu+\frac{d\xH}{\delta}w-\frac{1}{\delta}\nabla_yZ\rangle,
$$
where $\nabla_yZ = \frac{\partial Z}{\partial y^k}g_y^{kl}\partial^y_l$.  Thus the vector 
$\xnu+\frac{d\xH}{\delta}w-\frac{1}{\delta}\nabla_yZ$ is normal to the hypersurface at $y$, and is a multiple of $\ynu$.  To complete the Lemma we compute the length of this vector:
\begin{align*}
\|\xnu+\frac{d\xH}{\delta}w-\frac{1}{\delta}\nabla_yZ\|^2
&=1+\left(\frac{d\xH}{\delta}\right)^2 +2\frac{d\xH}{\delta}\langle \xnu,w\rangle+\frac{1}{\delta^2}|\nabla_yZ|^2\\
&\quad\null-\frac{2}{\delta}\langle \nabla_yZ,\xnu+\frac{d\xH}{\delta}w\rangle\\
&=1+\left(\frac{d\xH}{\delta}\right)^2+2\frac{\xH}{\delta^2}\left(Z-\frac{d^2H_x}{2}\right)+\frac{1}{\delta^2}|\nabla_yZ|^2\\
&\quad\null-\frac{2}{\delta}\langle\nabla_yZ,\xnu+\frac{d\xH}{\delta}w-\frac{1}{\delta}\nabla_yZ\rangle
-\frac{2}{\delta^2}|\nabla_yZ|^2\\
&=1+\frac{2\xH}{\delta^2}Z-\frac{1}{\delta^2}|\nabla_yZ|^2,
\end{align*}
where we used the fact that $\nabla_yZ$ is in the tangent space at $y$, hence orthogonal to 
$\xnu+\frac{d\xH}{\delta}w-\frac{1}{\delta}\nabla_yZ$.
\end{proof}

Similarly we have (writing $\xh$ for the second fundamental form at $(x,t)$)
\begin{equation}\label{eq:Zx}
\frac{\partial Z}{\partial x^i} = -d\xH\langle w,\partial^x_i\rangle+\frac{d^2}{2}\nabla_i\xH
+\delta d\xh_{iq}g_x^{qp}\langle w,\partial^x_p\rangle.
\end{equation}

Now the second derivatives:
\begin{align}
\frac{\partial^2Z}{\partial y^i\partial y^j}
&=\xH\left\langle \partial^y_i,\partial^y_j\right\rangle-d\xH\yh_{ij}\left\langle w,\ynu\right\rangle
-\delta\yh_{ij}\langle \ynu,\xnu\rangle.\label{eq:Zyy}\\
\frac{\partial^2Z}{\partial y^i\partial x^j}
&=-\xH\langle \partial^x_j,\partial^y_i\rangle +d\langle w,\partial^y_i\rangle\nabla_j\xH
+\delta \xh_{jq}g_x^{qp}\langle \partial^y_i,\partial^x_p\rangle
\label{eq:Zxy}\\
\frac{\partial^2Z}{\partial x^i\partial x^j} &=
\xH\langle \partial^x_j,\partial^x_i\rangle -d\langle w,\partial^x_i\rangle\nabla_j\xH
+d\xH\xh_{ij}\langle w,\xnu\rangle
-d\langle w,\partial^x_j\rangle \nabla_i\xH\notag\\
&\quad\null+\frac{d^2}{2}\nabla_j\nabla_i\xH+\delta d\nabla_j\xh_{iq}g_x^{qp}\langle w,\partial^x_p\rangle
-\delta\xh_{ij}-\delta d \xh_{iq}g_x^{qp}\xh_{pj}\langle w,\xnu\rangle.\label{eq:Zxx}
\end{align}

Finally we compute the time derivative:
\begin{align}\label{eq:Zt}
\frac{\partial Z}{\partial t} &=d\xH\langle w,-\yH\ynu+\xH\xnu\rangle + \frac{d^2}{2}\left(\Delta\xH+\xH|\xh|^2\right)\\
&\quad\null+\delta\langle -\yH\ynu+\xH\xnu,\xnu\rangle + \delta d\langle w,\nabla\xH\rangle.\notag
\end{align}

We compute at a point $(x,y)$ of $Z$, with $y\neq x$.  Choose local coordinates so that $\{\partial^x_i\}$ are orthonormal, $\{\partial^y_i\}$ are orthonormal, and $\partial^x_i = \partial^y_i$ for $i=1,\dots,n-1$.  Thus $\partial^x_n$ and $\partial^y_n$ are coplanar with $\xnu$ and $\ynu$.  
  
Now compute
\begin{align*}
\frac{\partial Z}{\partial t}& - \sum_{i,j=1}^n\left(g_x^{ij}\frac{\partial^2Z}{\partial x^i\partial x^j}+g_y^{ij}\frac{\partial^2Z}{\partial y_i\partial y^j}+2g_x^{ik}g_y^{jl}\langle\partial^x_k,\partial^y_l\rangle\frac{\partial^2Z}{\partial x^i\partial y^j}\right)\\
&=d\xH\langle w,-\yH\ynu+\xH\xnu\rangle + \frac{d^2}{2}\left(\Delta\xH+\xH|\xh|^2\right)+\delta\langle -\yH\ynu+\xH\xnu,\xnu\rangle\\
&\quad\null + \delta d\langle w,\nabla\xH\rangle-n\xH+d\xH\yH\langle w,\ynu\rangle + \delta \yH\langle\ynu,\xnu\rangle-n\xH-d\xH^2\langle w,\xnu\rangle\\
&\quad\null+2d\langle w,\nabla\xH\rangle-\frac{d^2}{2}\Delta\xH-\delta d\langle w,\nabla\xH\rangle+\delta\xH+\delta d\langle w,\xnu\rangle|\xh|^2+2(n-1)\xH\\
&\quad\null+2\langle \partial^x_n,\partial^y_n\rangle^2\xH
-2d\langle\partial^x_i,\partial^y_j\rangle\langle w,\partial^y_j\rangle\nabla_i\xH-2\delta\left(\xH-\xh_{nn}+\langle \partial^x_n,\partial^y_n\rangle^2\xh_{nn}\right)\\
&=Z|\xh|^2+2d\left\langle w,\partial^x_i-\langle\partial^x_i,\partial^y_k\right\rangle g_y^{kl}\partial^y_l\rangle g_x^{ij}\nabla_j\xH-2\left(\xH-\delta\xh_{nn}\right)\left(1-\langle \partial^x_n,\partial^y_n\rangle^2\right).
\end{align*}
The second term on the last line can be rewritten in terms of the first derivatives of $Z$ using Equation \eqref{eq:Zx}:  This gives $\nabla_j\xH = \frac{2}{d^2}\frac{\partial Z}{\partial x^j}+\frac{2}{d}\langle w,\xH\partial^x_j-\delta\xh_{jp}g_x^{pq}\partial^x_q\rangle$.  Also we observe that $\partial^x_n-\langle\partial^x_n,\partial^y_n\rangle\partial^y_n = \langle \partial^x_n,\ynu\rangle\ynu$.  Therefore at any critical point of $Z$ we have
\begin{align*}
&\frac{\partial Z}{\partial t} - \sum_{i,j=1}^n\left(g_x^{ij}\frac{\partial^2Z}{\partial x^i\partial x^j}+g_y^{ij}\frac{\partial^2Z}{\partial y_i\partial y^j}+2g_x^{ik}g_y^{jl}\langle\partial^x_k,\partial^y_l\rangle\frac{\partial^2Z}{\partial x^i\partial y^j}\right)-|\xh|^2Z\\
&\quad= 
2\left(\xH-\delta\xh_{nn}\right)\left(2\langle w,\ynu\rangle^2\langle \partial^x_n,\ynu\rangle^2-\frac{2\delta}{d\xH}\langle w,\ynu\rangle\langle\partial^x_n,\ynu\rangle\langle \partial^x_n,\partial^y_n\rangle\langle \partial^y_n,\xnu\rangle-\langle\partial^x_n,\ynu\rangle^2\right).
\end{align*}
To simplify this we use Equation \eqref{eq:Zy} to write $\langle \partial^y_n,\nu_x\rangle = -\frac{d\xH}{\delta}\langle w,\partial^y_n\rangle$.  The first two terms in the bracket then become
$$
2\langle w,\ynu\rangle\langle \partial^x_n,\ynu\rangle\left(\langle w,\ynu\rangle\langle \partial^x_n,\ynu\rangle+\langle \partial^x_n,\partial^y_n\rangle\langle w,\partial^y_n\rangle\right)=
2\langle w,\ynu\rangle\langle \partial^x_n,\ynu\rangle\langle w,\partial^x_n\rangle.
$$
To simplify this we apply Lemma 1 twice:  In the first factor (writing $\rho=\sqrt{1+\frac{2\xH}{\delta^2}Z}$)
$$
\langle w,\ynu\rangle = \frac{1}{\rho}\langle w,\xnu+\frac{d\xH}{\delta}w\rangle
=\frac{1}{\rho}\left(\frac{Z}{d\delta}-\frac{d\xH}{2\delta}+\frac{d\xH}{\delta}\right) =
\frac{1}{\rho}\frac{d\xH}{2\delta}\left(1+\frac{2Z}{d^2\xH}\right).
$$
For the third factor we apply a rotation $J$ in the plane spanned by $\xnu$ and $\ynu$ (note that $w$ is in this plane by Equation \eqref{eq:Zy}), taking $\xnu$ to $\partial^x_n$, $\ynu$ to $\partial^y_n$, and $w$ to a vector $Jw$ which is orthogonal to $w$.  This gives
$$
\langle w,\partial^x_n\rangle = \langle w, \rho\partial^y_n-\frac{d\xH}{\delta}Jw\rangle = -\rho\frac{\delta}{d\xH}\langle \partial^y_n,\xnu\rangle.
$$
The three terms together then give
$-\left(1+\frac{2Z}{d^2\xH}\right)\langle \partial^x_n,\ynu\rangle\langle \partial^y_n,\xnu\rangle
= \left(1+\frac{2Z}{d^2\xH}\right) \langle \partial^x_n,\ynu\rangle^2$.
Finally we have
\begin{align*}
&\frac{\partial Z}{\partial t} = \sum_{i,j=1}^n\left(g_x^{ij}\frac{\partial^2Z}{\partial x^i\partial x^j}+g_y^{ij}\frac{\partial^2Z}{\partial y_i\partial y^j}+2g_x^{ik}g_y^{jl}\langle\partial^x_k,\partial^y_l\rangle\frac{\partial^2Z}{\partial x^i\partial y^j}\right)\\
&\quad\quad\null+\left(|\xh|^2+\frac{4\xH\left(\xH-\delta\xh_{nn}\right)}{\delta^2}\langle w,\partial^y_n\rangle^2\right)Z.
\end{align*}
Since the coefficient of $Z$ is a smooth function which is bounded on $(M\times M)\setminus \{x=y\}$, the maximum principle implies that $Z$ remains non-negative if initially non-negative ($Z$ is zero on the diagonal $\{y=x\}$). 
\end{proof}

\begin{rem}

\begin{enumerate}
\item The computation is valid for curve-shortening flow of a convex curve;
\item  The estimate implies curvature pinching, i.e. $\xH g-\delta\xh\geq 0$;
\item We made no use of the sign assumption on $\delta$, so the result also holds for negative $\delta$.  This proves `exterior non-collapsing', i.e. the hypersurface remains outside the ball of radius $|\delta|/\xH$ which touches the tangent plane at $x$ on the exterior;
\item The latter implies lower curvature pinching, i.e. $\xH g +|\delta|\xh\geq 0$;
\item The same computation shows that $Z$ remains non-positive if initially non-positive.  This applies in the case where $M$ is convex, and proves that if $M$ is contained in the ball of radius $\delta/\xH$ which touches the tangent plane at $x$ for every $x$ at the initial time, then this remains true for positive times.  In this situation this implies curvature pinching, i.e. $\xH g-\delta\xh \leq 0$.
\item In the latter case the conclusion is much stronger than pointwise curvature pinching:  It shows that the inradius and circumradius are both comparable to the reciprocal of the mean curvature at every point, and consequently that the mean curvatures at \emph{different} points are comparable.  The curvature pinching then implies that principal curvatures at different points are also comparable.  This allows a very simple proof of convergence of convex hypersurfaces to spheres under mean curvature flow, recovering both Huisken's theorem \cite{Huisken} for $n\geq 2$ and Gage and Hamilton's theorem \cites{Gage,GH} for $n=1$.
\item If the assumption of positive mean curvature is dropped, the conclusion still holds if we replace the mean curvature $H$ by any positive solution $f$ of the equation $\frac{\partial f}{\partial t} = \Delta f + \|A\|^2 f$.  In particular, this applies to prove a non-collapsing result if the initial hypersurface is star-shaped (see \cite{Smoczyk}).
\end{enumerate}
\end{rem}

\begin{bibdiv}
\begin{biblist}

\bib{Gage}{article}{
   author={Gage, M. E.},
   title={Curve shortening makes convex curves circular},
   journal={Invent. Math.},
   volume={76},
   date={1984},
   number={2},
   pages={357--364},
   }

\bib{GH}{article}{
   author={Gage, M.},
   author={Hamilton, R. S.},
   title={The heat equation shrinking convex plane curves},
   journal={J. Differential Geom.},
   volume={23},
   date={1986},
   number={1},
   pages={69--96},
  }

\bib{Huisken}{article}{
   author={Huisken, Gerhard},
   title={Flow by mean curvature of convex surfaces into spheres},
   journal={J. Differential Geom.},
   volume={20},
   date={1984},
   number={1},
   pages={237--266},
   }

\bib{SW}{article}{
   author={Sheng, Weimin},
   author={Wang, Xu-Jia},
   title={Singularity profile in the mean curvature flow},
   journal={Methods Appl. Anal.},
   volume={16},
   date={2009},
   number={2},
   pages={139--155},
  }

\bib{Smoczyk}{article}{
   author={Smoczyk, Knut},
   title={Starshaped hypersurfaces and the mean curvature flow},
   journal={Manuscripta Math.},
   volume={95},
   date={1998},
   number={2},
   pages={225--236},
  }

\bib{WhiteMCF}{article}{
   author={White, Brian},
   title={The size of the singular set in mean curvature flow of mean-convex
   sets},
   journal={J. Amer. Math. Soc.},
   volume={13},
   date={2000},
   number={3},
   pages={665--695 (electronic)},
  }

\end{biblist}
\end{bibdiv}

\end{document}